\theoremstyle{plain}
\newtheorem{lemma}{Lemma}
\newtheorem{theorem}{Theorem}
\newtheorem{definition}{Definition}
\theoremstyle{definition}
\newtheorem{example}{Example}
\newcommand{\beq}{\begin{equation}}
\newcommand{\eeq}{\end{equation}}
\newcommand{\beqa}{\begin{eqnarray}}
\newcommand{\eeqa}{\end{eqnarray}}
\def\NN{{\mathbb N}}
\def\NN{{\mathbb N}}
\begin{document}

\title{Using Grassmann calculus in combinatorics: Lindstr\"{o}m-Gessel-Viennot lemma and Schur functions}

\author{S. Carrozza\footnote{Labri, Univ. Bordeaux (sylvain.carrozza@labri.fr)}, T. Krajewski\footnote{CPT, Aix-Marseille Univ. (krajewsk@gmail.com)}, A. Tanasa\footnote{Labri, Univ. Bordeaux, Talence $\&$ NIPNE Magurele $\&$ IUF (adrian.tanasa@labri.fr)}}
\maketitle

\abstract{Grassmann (or anti-commuting) variables are extensively used in theoretical physics.
In this paper we use Grassmann variable calculus to give new proofs of celebrated combinatorial identities such as the Lindstr\"{o}m-Gessel-Viennot formula for graphs with cycles and the Jacobi-Trudi identity. Moreover, we define a one parameter extension of Schur polynomials that obey a natural convolution identity.}


\section{Introduction - Grassmann variables and calculus}

Grassmann (or anti-commuting) variables $\chi_1, ..., \chi_m$ ($m\in\NN)$
are defined through their anti-commutation relations:
\beqa
\label{defg}
 \chi_i \chi_j = - \chi_j \chi_i, \quad \forall  i,  j=1,\ldots , m.
\eeqa
As an immediate consequence, one has the following crucial identity:
\beqa
\label{defg2}
 \chi_i^2= 0, \quad \forall  i=1,\ldots , m.
\eeqa
More precisely, the Grassmann algebra $\Lambda_m$ over the $m$ anti-commuting variables $\{\chi_1, \ldots, \chi_m\}$ is defined as the linear span of the $2^m$ independent products of the $\chi_i$'s. Its elements are functions of the form
\beq
f(\chi) = \sum_{n=0}^{m} \frac{1}{m!}\sum_{1 \leq i_1, \ldots \, i_n \leq n} a_{i_1\ldots i_n} \chi_{i_1} \ldots \chi_{i_n},
\eeq
where $a_{i_1 \ldots i_n}$ are complex coefficients, antisymmetric with respect to their indices, $a_{i_{\sigma(1),\dots i_{\sigma(n)}}}=\epsilon(\sigma) a_{i_{1},\dots,i_{n}}$, and the vector space structure is simply defined by addition and scalar multiplication of the coefficients. A function $f$ which is a sum of only even (resp. odd) monomials is called { even} (resp. odd). The multiplication rule for monomials is 
\begin{equation}\label{multiplication}
(\chi_{i_1} \ldots \chi_{i_n}) (\chi_{j_1} \ldots \chi_{j_p}) = \left\{ 
\begin{split} &0 \qquad &\mathrm{if} \quad \{ i_1, \ldots , i_n \} \cap \{ j_1, \ldots , j_p \} \neq \emptyset \\
 &\mathrm{sgn}(k) \, \chi_{k_1} \ldots \chi_{k_{n+p}} & \mathrm{otherwise} 
\end{split}\right.\, ,
\end{equation}
with $k = (k_1, \ldots , k_{n+p})$ the permutation of $(i_1, \ldots , i_n, j_1 , \ldots , j_{p})$ such that $k_1 < \ldots < k_{n+p}$. It is then extended to the whole Grassmann algebra by distributivity. For notational convenience, we will furthermore commute complex and Grassmann variables, and to permute the Grassmann variables themselves following the defining rule \eqref{defg}.

One then defines the exponential of a Grassmann function $f$ by
\begin{equation}
\label{expg}
   e^{f(\chi)} := \sum_{p = 0}^{+ \infty} \frac{1}{p!} f(\chi)^p,
\end{equation}
which, following \eqref{defg}, is a simple polynomial expression. In particular one immediately finds that 
$e^{\chi_{i_1} \ldots \chi_{i_n}} = 1 + \chi_{i_1} \ldots \chi_{i_n}.$
Another interesting property is that $e^{A}e^B = e^{A+B}$ for any even Grassmann functions $A$ and $B$ (since $A$ and $B$ therefore commute). 

Due to these multiple properties, Grassmann variables are extensively used in quantum field theory to describe the physics of fermions\footnote{Example of fermions are: the electrons, the neutrinos, the quarks.}, which are particles obeying the so-called Fermi-Dirac statistics, statistics which is based on anti-commutation laws (unlike bosons\footnote{Examples of bosons are: photons, gluons, Higgs bosons.}, which are particles obeying the Bose-Einstein statistics (statistics based on commutation laws), and which are described by physicists using usual commuting variables) -- the interested reader is reported to quantum field theory textbooks such as \cite{QFT-ZJ} for more details.

\medskip
The {\bf Grassmann integral} $\int d\chi \equiv \int d\chi_m \ldots d\chi_1 $ is the unique linear map from 
$\Lambda_m$ to $\mathbb{C}$ s. t.
  $$\int d\chi \, \chi_1 \ldots \chi_m = 1.$$
Moreover, $\int d\chi \, \chi_{i_1} \ldots \chi_{i_n} = 0 \quad \mathrm{whenever} \quad n < m$.
\begin{example}
Let $\chi$ and $\bar \chi$ be two independent Grassmann variables (the bar has nothing to do with any complex conjugation) and let $a \in \mathbb{C}$. One computes:
\begin{equation}
    \int d \bar \chi d \chi \, 
    e^{-\bar \chi a \chi}=
    \int d \bar \chi d \chi 
    \, (1-\bar \chi a \chi) =
     \int d \bar \chi d \chi 
    \, (-\bar \chi a \chi)=
    a \int d \bar \chi d \chi 
    \, \chi \bar \chi =
    a .
\end{equation}
Similarly, one computes:
 $     \int d \bar \chi d \chi \, 
    \chi \bar \chi \, e^{-\bar \chi a \chi}
    = 1.$
\end{example}
\begin{example}
Consider $N$ independent Grassmann variables $\{ \chi_i\, \vert \, 1 \leq i \leq N\}$. Then, for any permutations $\sigma$ of $\{ 1,\ldots, N\}$, one has:
\beq\label{rule_int}
\int d\chi_N \ldots d\chi_1 \, \chi_{\sigma(1)} \ldots \chi_{\sigma(N)}= \mathrm{sgn}(\sigma).
\eeq
\end{example}
%

Let $M$ be an $N-$dimensional square matrix whose entries are commuting variables (such as complex numbers). 
Its {\bf determinant} 
 can be expressed as
a Grassmann Gaussian integral over $2N$ Grassmann variables $\bar \chi_i,\chi_i$, $i=1,\ldots, N$. As above, the conjugate notation conveniently accounts for the doubling of variables.
Using the morphism property of the exponential on even functions, one 
proves:
\beqa\label{det}
\det M = \int d\bar \chi_N d\chi_N \ldots d\bar \chi_1 d\chi_1 \,  \exp\left(-\sum_{i,j=1}^N \bar\chi_i M_{ij} \chi_j  \right).
\eeqa
Similarly, one can express any {\bf minor} of $M$ using Grassmann calculus.
Let $0\le p \le N$ and let $I=\{ i_1, \ldots, i_p\}$, $J=\{ j_1,\ldots, j_p\}$ be two subsets of indices of $\{1,\ldots, N\}$, where $i_1<\ldots <i_p$ and $j_1<\ldots <j_p$.
We denote by $M_{I^c J^c}$ the matrix obtained by deleting from $M$ the rows with indices in $I$ and the column with indices in $J$. One has
\begin{equation}
\label{minorI}
    (-1)^{\Sigma I + \Sigma J} \, \det (M_{I^c J^c})=
    \int d \bar \chi d \chi \, \chi_J \bar \chi_I \, \exp\left( -\sum_{i,j=1}^N \bar\chi_i M_{ij} \chi_j \right),
\end{equation}
where we have used the notation
$$d \bar \chi d \chi := d\bar \chi_N d\chi_N \ldots d\bar \chi_1 d\chi_1.$$ 
Moreover, $\Sigma I =\sum_{k=1}^p i_k$, $\Sigma J =\sum_{k=1}^p j_k$ and 
\begin{equation}
    \chi_J \bar \chi_I := \chi_{j_1} \bar \chi_{i_1}\ldots \chi_{j_p} \bar \chi_{i_p}\,.
\end{equation}

One can 
prove more general {\bf Grassmann Gaussian integral} formulas\footnote{See, for example, \cite{QFT-ZJ}, for more details on Grassmann integration and Grassmann changes of variables.}, such as:
\begin{equation}
\label{gausiana}
     \int d \eta d \bar \eta\, \exp\left( \sum_{k,\ell=1}^N\bar \eta_k M^{-1}_{k\ell}
      \eta_\ell + \sum_{k=1}^N (\bar \psi_k \eta_k
      + \bar \eta_k \psi_k ) \right) = 
    \, \det(M^{-1})\, \exp\left( - \sum_{k,\ell=1}^N
    \bar \psi_k M_{k\ell} \psi_\ell \right).
\end{equation}
The integrand here leaves 
in the Grassmann algebra $\{ \bar \eta_i, \eta_i, \bar \chi_i,\chi_i \, \vert \, i=1,\ldots, N \}$. 



In this paper, we use Grassmann calculus to prove the  Lindstr\"{o}m-Gessel-Viennot (LGV) lemma and the  Jacobi-Trudi identity.
Note that the LGV lemma proof we give does not require the use of any involution arguments. The vanishing contribution of intersecting paths will appear as a simple consequence of the Grassmann nilpotency identity \eqref{defg2}! 

\section{Grassmann proof of the LGV lemma}

Let $G$ be a finite directed graph. Note that 
we allow loops and multiple edges.
Let $V=\{ v_1, \ldots, v_N\}$ be the set of vertices of $G$. One assigns to each edge $e$ a {\bf weight} $w_e$. One further assumes that the variables $w_e$ commute with each others. 

A {\bf path} $P$ from $v$ to $v'$ 
is a collection of edges $(e_1, e_2, \ldots , e_k)$ such that one can reach $v'$ from $v$ by successively traversing $e_1, \ldots, e_k$ in the specified order. 
Following \cite{talaska}, let us recall the following definitions. 
The {\bf weight} of a given path $P=(e_1,\ldots , e_k)$  is: 
\beq
\mathrm{wt}(P):=\prod_{k=1}^{m} w_{e_k}\,.
\eeq
The {\bf weight path matrix} of the graph $G$ is the matrix $M= (m_{ij})_{1\leq i,j\leq N}$,  whose entries are:
\beq
m_{ij}:=\sum
\mathrm{wt}(P).
\eeq
The sum above is taken on paths $P$ from $v_i$ to $v_j$.

These quantities are considered as formal power-series in the weights.
A crucial remark is that
\begin{equation}\label{ma}
M=(\mathrm{Id} - A)^{-1},
\end{equation}
where $A = (A_{ij})$ is the {\bf weighted adjacency matrix} of the graph ($A_{ij} = w_{ij}$ if there is an edge from $v_i$ to $v_j$, and $0$ otherwise).

A {\bf cycle} is a path from a vertex $v$ to itself (or more precisely an equivalent class thereof up to change of source vertex). We denote by $\mathcal{C}$ the set of all possible collections of self-avoiding and pairwise vertex-disjoint cycles, including the empty collection. Given $\mathbf{C} = (C_1, \ldots, C_k) \in \mathcal{C}$, we define its weight and sign as
\beq
\mathrm{wt}(\mathbf{C}):= \prod_{i=1}^k \mathrm{wt}(C_i)\,, \qquad \mathrm{and} \qquad \mathrm{sgn}(\mathbf{C}):= (-1)^k,
\eeq
while, by convention, $\mathrm{wt}(\mathbf{C}) = \mathrm{sgn}(\mathbf{C}) =1$ for the empty collection.
\begin{lemma}\label{detM} 
The determinant of $M^{-1}$ is:
\beq
\det (M^{-1}) = \sum_{\mathbf{C} \in \mathcal{C}} \mathrm{sgn}(\mathbf{C}). \mathrm{wt}(\mathbf{C}).
\eeq
\end{lemma}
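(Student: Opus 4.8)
The plan is to start from the relation \eqref{ma}, which gives $M^{-1} = \mathrm{Id} - A$ and hence $\det(M^{-1}) = \det(\mathrm{Id}-A)$, and then to read off the right-hand side directly from the Leibniz expansion of this determinant. Writing $(\mathrm{Id}-A)_{ij} = \delta_{ij} - A_{ij}$, I would expand
\[
\det(\mathrm{Id}-A) = \sum_{\sigma \in S_N} \mathrm{sgn}(\sigma) \prod_{i=1}^N \left(\delta_{i\sigma(i)} - A_{i\sigma(i)}\right).
\]
The whole point of the argument is that the permutations organize themselves naturally into the collections of disjoint cycles appearing on the right-hand side.

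Next I would expand the product over $i$ by choosing, for each index, either the factor $\delta_{i\sigma(i)}$ or the factor $-A_{i\sigma(i)}$. Let $T$ be the set of indices where the $-A$ term is selected; for $i \notin T$ the factor $\delta_{i\sigma(i)}$ forces $\sigma(i) = i$, so $\sigma$ fixes $T^c$ and restricts to a permutation $\tau \in \mathrm{Sym}(T)$. Reindexing the double sum (over $\sigma$ and over the selection) by the pair $(T,\tau)$, I obtain
\[
\det(\mathrm{Id}-A) = \sum_{T \subseteq V}\ \sum_{\tau \in \mathrm{Sym}(T)} (-1)^{|T|}\, \mathrm{sgn}(\tau) \prod_{i \in T} A_{i\tau(i)}.
\]
Decomposing $\tau$ into its disjoint cycles $c_1, \ldots, c_k$ (cycles of length one being loops), each cycle $c_m = (a_1 \to \cdots \to a_{\ell_m} \to a_1)$ contributes $\prod_s A_{a_s a_{s+1}}$, i.e. the product of edge weights around a self-avoiding cycle $C_m$ of the graph; distinct cycles of $\tau$ have disjoint supports, so the $C_m$ are pairwise vertex-disjoint. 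This sets up a bijection between pairs $(T,\tau)$ and collections $\mathbf{C} = (C_1,\ldots,C_k) \in \mathcal{C}$ (with $T=\emptyset$ corresponding to the empty collection), under which $\prod_{i\in T} A_{i\tau(i)} = \mathrm{wt}(\mathbf{C})$.

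It then remains to match the signs. Since $|T| = \sum_m \ell_m$ and $\mathrm{sgn}(\tau) = \prod_m (-1)^{\ell_m - 1}$, I compute $(-1)^{|T|}\mathrm{sgn}(\tau) = \prod_m (-1)^{\ell_m}(-1)^{\ell_m-1} = (-1)^k = \mathrm{sgn}(\mathbf{C})$, independently of the individual cycle lengths. Summing over all $(T,\tau)$, or equivalently over all $\mathbf{C}\in\mathcal{C}$, then yields the claimed formula. The step requiring the most care is exactly this sign bookkeeping, together with the verification that the correspondence $(T,\tau) \leftrightarrow \mathbf{C}$ is a genuine bijection; in particular one must check that fixed points of $\tau$ legitimately account for loops and that the rotation-invariance of a cyclic permutation matches the ``change of source vertex'' equivalence in the definition of a cycle. (When $G$ has multiple edges, $A_{ij}$ is the sum of the weights of the edges $v_i \to v_j$, and expanding the products $\prod_s A_{a_s a_{s+1}}$ reproduces the sum over the individual cycles, so the identity persists as an equality of formal power series in the weights.)
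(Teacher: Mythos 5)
Your proof is correct, but it takes a genuinely different route from the paper's. You expand $\det(\mathrm{Id}-A)$ by the Leibniz formula and reorganize the sum over permutations into pairs $(T,\tau)$, identifying the disjoint-cycle decomposition of $\tau$ with a collection $\mathbf{C}\in\mathcal{C}$; the sign computation $(-1)^{|T|}\,\mathrm{sgn}(\tau)=(-1)^{k}=\mathrm{sgn}(\mathbf{C})$ is carried out correctly, including the treatment of fixed points as loops and of parallel edges. The paper instead writes $\det(M^{-1})$ as the Grassmann Gaussian integral \eqref{det}, expands the integrand as $\prod_{i}(1+\chi_i\bar\chi_i)\prod_{k,l}(1+A_{kl}\bar\chi_k\chi_l)$, and argues that the only monomials surviving the integration are those in which every vertex of the selected edge-set has exactly one incoming and one outgoing edge, the nilpotency relation \eqref{defg2} enforcing self-avoidance and pairwise vertex-disjointness automatically; the surviving terms are thus indexed by $\mathcal{C}$, with the sign $\mathrm{sgn}(\mathbf{C})$ read off from \eqref{rule_int} as the signature of a product of disjoint even-length cycles on the doubled variables. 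The two arguments are structurally parallel -- the Grassmann integral is precisely a bookkeeping device for the signed permutation expansion: where you impose $\sigma(i)=i$ off the support $T$ via the Kronecker deltas, the paper saturates the unused variables with the factors $\chi_i\bar\chi_i$. What your route buys is elementarity and self-containedness, with no Grassmann formalism needed. What the paper's route buys is reusability: the identical expansion argument is invoked again, essentially verbatim, in the proof of Theorem \ref{GV:thm}, where open paths appear alongside cycles and nilpotency replaces the involution of the classical LGV argument; your Leibniz expansion does not extend as directly to that minor computation. Since showcasing the Grassmann technique is the stated purpose of the paper, the difference of method is worth noting, though it does not affect correctness.
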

\begin{proof}
Equations \eqref{ma} and \eqref{det} yield:
\begin{align}\label{detMpf}
\mathrm{det}(M^{-1}) &= \int d\bar \chi d\chi \,  \exp\left(-\sum_{i,j=1}^N \bar\chi_i ( \delta_{ij} - A_{ij} ) \chi_j  
\right)
= 
\int d\bar \chi d\chi  \, \prod_{i = 1}^N (1 + \chi_i \bar{\chi}_i) \prod_{k,l = 1}^N (1 + A_{kl} \bar\chi_k \chi_l).
\end{align}
The integrand decomposes as sums of terms of the form
\beq
A_{k_1 l_1} \ldots A_{k_s l_s} \times  \bar\chi_{k_1} {\chi}_{l_1}
\ldots \bar\chi_{k_s} {\chi}_{l_s} \; \chi_{i_1} \bar\chi_{i_1} \ldots \chi_{i_r} \bar\chi_{i_r},
\eeq
giving a non-zero contribution to the integral if and only if: all Grassmann variables appear exactly once and $A_{k_j l_j} \neq 0$ for all $1\leq j\leq s$. Let us assume that this is the case. The inequality $A_{k_j l_j} \neq 0$ 
implies that there is a directed edge $e_j$ from $v_{k_j}$ to $v_{l_j}$; this means that $A_{k_j l_j} = w_{k_j l_j}$. 
Let us call $H_s$ the subgraph made out of the edges $e_1, \, \ldots, \, e_s$. Each ingoing (resp. outgoing) edge at a vertex $v_\ell \in H_s$ is associated to a variable $\chi_\ell$ (resp. $\bar\chi_\ell$). Hence there cannot be more than one ingoing (resp. outgoing) edge of $H_s$ at each $v_\ell$. On the other hand, if there were only say one ingoing but no outgoing edge at $v_\ell$, this would require that $\bar\chi_{i_k} = \bar\chi_\ell$ for some $1 \leq k \leq r$. This would however necessarily bring a second factor $\chi_{i_k}$ and therefore cancel the integrand. We conclude that there must be exactly one ingoing at one outgoing edge at each vertex of $H_s$. This means that $H_s$ must decompose into a collection $\mathbf{C}$ of self-avoiding and pairwise vertex-distinct cycles. Furthermore there is in this case a unique choice of indices $1 \leq i_1 < \ldots < i_r \leq N$ yielding a non-vanishing monomial of degree $2N$. Each collection of cycles $\mathbf{C}$ is weighted by $\mathrm{wt}(\mathbf{C})$, up to a sign. Moreover, the integral is of the form of \eqref{rule_int}, with $\sigma$ a product of $\vert\mathbf{C}\vert$ disjoint cycles of even length, the other cycles being trivially of length $1$. The signature of $\sigma$ is therefore $\mathrm{sgn}(\mathbf{C})$, and we conclude that $\mathbf{C}$ contributes with a term $\mathrm{sgn}(\mathbf{C}) \mathrm{wt}(\mathbf{C})$.
\end{proof}

One considers now the minor $M_{{\cal A}{\cal B}}$, where 
${\cal A} = \{ a_1, \ldots , a_p \}$ and $ {\cal B} = \{ a_1, \ldots , a_p \}$ are $p$-dimensional sets of indices in $\{1,\ldots , n\}$. A {\bf p-path} from $\cal A$ to $\cal B$ is a collection of paths $\mathbf{P} = (P_1, \ldots, P_k)$ s. t. $P_i$ connects $a_i$ to $b_{\sigma_{\mathbf{P}} (i)}$, for some permutation $\sigma_{\mathbf{P}}$. The weight and sign of $\mathbf{P}$ are furthermore given by:
\beq
\mathrm{wt}(\mathbf{P}):= \prod_{i=1}^k \mathrm{wt}(P_i)\,, \qquad \mathrm{and} \qquad \mathrm{sgn}(\mathbf{P}):= \mathrm{sgn}(\sigma_{\mathbf{P}}).
\eeq
The p-path 
$\mathbf{P}$ is {\bf self-avoiding} if: 1) each $P_i$ is self-avoiding; 2) $P_i$ and $P_j$ are vertex-disjoint whenever $i \neq j$. We denote by $\mathcal{P}_{{\cal A},{\cal B}}$ the set of  self-avoiding p-paths from $\cal A$ to $\cal B$.
 
Finally, a {\bf self-avoiding flow} from $\cal A$ to $\cal B$ is a pair $(\mathbf{P}, \mathbf{C})$ such that: 1) $\mathbf{P} \in \mathcal{P}_{{\cal A},{\cal B}}$; 2) $\mathbf{C} \in \mathcal{C}$; and 3) $\mathbf{P}$ and $\mathbf{C}$ are vertex disjoint. We denote the set of self-avoiding flow from $\cal A$ to $\cal B$ by $\mathcal{F}_{{\cal A},{\cal B}}$.

The LGV formula for graph with cycles is:
\begin{theorem}
\label{GV:thm}
One has \cite{talaska}
\begin{equation}\label{gv}
    \det (M_{{\cal A} {\cal B}}) =\frac{\underset{(\mathbf{P}, \mathbf{C})\in \mathcal{F}_{{\cal A},{\cal B}}}{\sum} \mathrm{sgn}(\mathbf{P})\mathrm{wt}(\mathbf{P}) \, \mathrm{sgn}(\mathbf{C})\mathrm{wt}(\mathbf{C})}{\underset{\mathbf{C} \in \mathcal{C}}{\sum} \mathrm{sgn}(\mathbf{C}) \mathrm{wt}(\mathbf{C})}.
\end{equation}
In particular, if $G$ is acyclic then \cite{GV, L}:
\beq
\det (M_{{\cal A}{\cal B}}) = \sum_{\mathbf{P} \in \mathcal{P}_{{\cal A},{\cal B}}} \mathrm{sgn}(\mathbf{P})\mathrm{wt}(\mathbf{P}).
\eeq
\end{theorem}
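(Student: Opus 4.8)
The plan is to mimic the proof of Lemma~\ref{detM}, but with source insertions that pin down the endpoints of the paths. Write $B:=\mathrm{Id}-A$, so that the weight path matrix is $M=B^{-1}$ and $\det(M^{-1})=\det B$. The starting point is the integral representation
\begin{equation}\label{key-rep}
\det(M^{-1})\,\det(M_{\mathcal{A}\mathcal{B}}) = \int d\bar\chi d\chi\, \chi_{a_1}\bar\chi_{b_1}\cdots\chi_{a_p}\bar\chi_{b_p}\,\exp\left(-\sum_{i,j=1}^N \bar\chi_i B_{ij}\chi_j\right).
\end{equation}
This can be obtained by applying \eqref{minorI} to the matrix $B$ with $I=\mathcal{B}$ and $J=\mathcal{A}$ and then invoking the classical Jacobi complementary-minor identity to trade $\det(B_{\mathcal{B}^c\mathcal{A}^c})$ for $\det(B)\det(M_{\mathcal{A}\mathcal{B}})$; equivalently, \eqref{key-rep} follows by differentiating the Gaussian source formula \eqref{gausiana} with respect to the Grassmann sources $\psi,\bar\psi$, which is precisely the fermionic Wick theorem expressing a minor of $M=B^{-1}$ as a normalized correlator of the $B$-Gaussian. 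The virtue of \eqref{key-rep} is that the Gaussian weight is again the combinatorially transparent $\exp(-\bar\chi B\chi)=\prod_i(1+\chi_i\bar\chi_i)\prod_{k,\ell}(1+A_{k\ell}\bar\chi_k\chi_\ell)$ already used in Lemma~\ref{detM}.

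The next step is to expand the right-hand side of \eqref{key-rep} exactly as in Lemma~\ref{detM} and to read off which configurations survive. A nonzero contribution requires a monomial in which every one of the $2N$ Grassmann variables appears exactly once. The inserted factor $\chi_{a_i}$ already saturates the variable $\chi_{a_i}$; by nilpotency \eqref{defg2} the vertex $a_i$ can then carry neither the passive factor $\chi_{a_i}\bar\chi_{a_i}$ nor an incoming edge (both would reproduce $\chi_{a_i}$), so it must emit exactly one outgoing edge, i.e. it is a source. Dually, $\bar\chi_{b_i}$ forces $b_i$ to be a sink with exactly one incoming edge. Every other vertex behaves as in Lemma~\ref{detM}: it is either passive or carries exactly one incoming and one outgoing edge. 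Consequently the active edges organize into vertex-disjoint directed paths running from the vertices of $\mathcal{A}$ to those of $\mathcal{B}$, together with a disjoint family of vertex-disjoint cycles -- precisely a self-avoiding flow $(\mathbf{P},\mathbf{C})\in\mathcal{F}_{\mathcal{A},\mathcal{B}}$, with $P_i$ joining $a_i$ to $b_{\sigma_{\mathbf{P}}(i)}$. Self-avoidance and pairwise disjointness are once more automatic consequences of \eqref{defg2}, with no involution argument needed, and the product of edge weights attached to such a configuration is $\mathrm{wt}(\mathbf{P})\,\mathrm{wt}(\mathbf{C})$.

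It then remains to collect the signs. Each surviving monomial must be brought to the canonical order $\chi_1\bar\chi_1\cdots\chi_N\bar\chi_N$ so that its integral is evaluated by \eqref{rule_int}. The cycle part contributes $\mathrm{sgn}(\mathbf{C})=(-1)^{|\mathbf{C}|}$ exactly as in Lemma~\ref{detM}, while reordering the strings $\bar\chi_{a_i}\chi\bar\chi\cdots\chi_{b_{\sigma_{\mathbf{P}}(i)}}$ attached to the $P_i$ and combining them with the insertion $\chi_{a_1}\bar\chi_{b_1}\cdots$ yields the signature $\mathrm{sgn}(\sigma_{\mathbf{P}})=\mathrm{sgn}(\mathbf{P})$. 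Granting this, the right-hand side of \eqref{key-rep} equals $\sum_{(\mathbf{P},\mathbf{C})\in\mathcal{F}_{\mathcal{A},\mathcal{B}}}\mathrm{sgn}(\mathbf{P})\mathrm{wt}(\mathbf{P})\,\mathrm{sgn}(\mathbf{C})\mathrm{wt}(\mathbf{C})$, and dividing by $\det(M^{-1})=\sum_{\mathbf{C}\in\mathcal{C}}\mathrm{sgn}(\mathbf{C})\mathrm{wt}(\mathbf{C})$ from Lemma~\ref{detM} gives \eqref{gv}. When $G$ is acyclic one has $\mathcal{C}=\{\emptyset\}$, so the denominator is $1$ and the flows degenerate to self-avoiding p-paths, recovering the classical acyclic LGV formula.

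The main obstacle is precisely this sign bookkeeping: one must verify that the Grassmann reordering sign factorizes cleanly as $\mathrm{sgn}(\mathbf{P})\,\mathrm{sgn}(\mathbf{C})$, independently of the individual path lengths and of how the path, cycle and passive monomials interleave. I would handle it by showing that, up to a length-dependent sign that is reabsorbed uniformly, each path $P_i$ contracts to a single effective \emph{super-edge} from $a_i$ to $b_{\sigma_{\mathbf{P}}(i)}$; the resulting $p\times p$ arrangement then supplies the determinant sign $\mathrm{sgn}(\sigma_{\mathbf{P}})$, while the closed loops contribute $(-1)^{|\mathbf{C}|}$ as before. Establishing that these two contributions decouple, and that the passive factors carry trivial sign, is the delicate point of the argument; everything else is a direct transcription of the expansion already carried out for Lemma~\ref{detM}.
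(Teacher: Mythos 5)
Your proof is correct, and its combinatorial core is exactly the paper's: expand the Gaussian as $\prod_i(1+\chi_i\bar\chi_i)\prod_{k,\ell}(1+A_{k\ell}\bar\chi_k\chi_\ell)$, use nilpotency \eqref{defg2} to force each vertex of $\mathcal{A}$ to be a source and each vertex of $\mathcal{B}$ a sink, identify the surviving configurations with self-avoiding flows, and divide by Lemma \ref{detM}. The genuine difference is how you arrive at the key representation
\begin{equation*}
\det(M^{-1})\,\det(M_{\mathcal{A}\mathcal{B}})=\int d\bar\chi\, d\chi\;\chi_{a_1}\bar\chi_{b_1}\cdots\chi_{a_p}\bar\chi_{b_p}\,\exp\left(-\sum_{i,j=1}^{N}\bar\chi_i\left(\delta_{ij}-A_{ij}\right)\chi_j\right).
\end{equation*}
The paper gets there while staying entirely inside Grassmann calculus: it applies \eqref{minorI} to $M$ itself (with insertions $\psi_{\mathcal{B}^c}\bar\psi_{\mathcal{A}^c}$ on the \emph{complementary} index sets), then uses the source formula \eqref{gausiana} together with the auxiliary integration identity \eqref{lema} to trade the $M$-Gaussian for the $M^{-1}$-Gaussian with insertions $\bar\eta_{\mathcal{B}}\eta_{\mathcal{A}}$. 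You instead get it in one step, either from Jacobi's complementary-minor identity --- which is precisely the shortcut the paper mentions and deliberately declines, so as to ``rely exclusively on Grassmann calculus'' --- or from the fermionic Wick theorem obtained by differentiating \eqref{gausiana} in the sources; the latter is in substance a repackaging of the paper's own steps, since \eqref{gausiana} combined with \eqref{lema} amounts to a proof of that Wick statement. Your version is shorter and puts the combinatorially transparent Gaussian up front; the paper's is longer but self-contained in Grassmann variables, which is its advertised methodological point. Two reassurances on the issues you flagged: the sign conventions in your representation do check out (with the interleaved insertions $\chi_{a_i}\bar\chi_{b_i}$, the interior vertices of each path contribute even blocks $\chi_v\bar\chi_v$, so no length-dependent sign survives; the only residual signs are $\mathrm{sgn}(\sigma_{\mathbf{P}})$ from reordering the sink insertions and $\mathrm{sgn}(\mathbf{C})$ from the cycles, which is exactly the clean factorization you hoped for); and leaving this bookkeeping partly ``granted'' puts you at the same level of rigor as the paper, whose own sign argument is an equally brief relabelling-plus-parity count.
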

\noindent Let us now give the Grassmann calculus proof of this identity. 
\begin{proof}
The left-hand side of \eqref{gv} is a minor of the matrix $M$. We need to re-express it as a minor of $M^{-1}$. To this purpose, we could directly use 
$\det((M^{-1})_{{\cal A}{\cal B}}) = (-1)^{\Sigma {\cal A} + \Sigma {\cal B}} \, \frac{\det(M_{{\cal B}^c  {\cal A}^c})}{\det (M)}$. Nevertheless, in this paper we instead rely exclusively on Grassmann calculus.
One can thus use formula \eqref{minorI} to express $\det (M_{{\cal A}{\cal B}})$ as
\begin{equation}
      (-1)^{\Sigma {\cal A}^c + \Sigma {\cal B}^c}\,  \int d\bar \psi d\psi  \, \psi_{{\cal B}^c} \bar \psi_{{\cal A}^c} \, \exp\left(-\sum_{i,j=1}^N \bar\psi_i M_{ij} \psi_j \right).
\end{equation}
We now re-express the exponential above using the Grassmann Gaussian integral formula \eqref{gausiana}. This leads to
\begin{equation}
\label{inainte0}
      \frac{(-1)^{\Sigma {\cal A}^c + \Sigma {\cal B}^c}}{\det (M^{-1})}\int d\bar \psi d\psi  \, \psi_{{\cal B}^c} \bar \psi_{{\cal A}^c} \, 
      \int d\eta d\bar\eta  \exp\left(\sum_{k\ell=1}^N\bar \eta_k M^{-1}_{k\ell}
      \eta_\ell + \sum_{k=1}^N (\bar \psi_k \eta_k
      + \bar \eta_k \psi_k ) \right).
\end{equation}
The denominator is given by Lemma \ref{detM}. The integral in the numerator writes: 
\begin{equation}
     \label{inainte}
    (-1)^{\Sigma {\cal A}^c + \Sigma {\cal B}^c }
      \int  d \eta d\bar\eta  \, 
       \exp\left( \sum_{k,\ell=1}^N \bar \eta_k M^{-1}_{k\ell}
      \eta_\ell\right)
      \int d\bar \psi d\psi \,  \psi_{{\cal B}^c} \bar \psi_{{\cal A}^c} \, 
      \exp\left( \sum_k (\bar \psi_k \eta_k
      + \bar \eta_k \psi_k ) \right).
\end{equation}
In order to perform the Grassmann integral on the sets of variables $\bar \psi$ and $\psi$ in \eqref{inainte}, we use the following result:

\begin{lemma}
The following identity holds 
\begin{equation}
    \label{lema}
\int d\bar \psi d\psi \,  \psi_{{\cal B}^c} \bar \psi_{{\cal A}^c} \,
      \exp\left( \sum_{k=1}^N (\bar \psi_k \eta_k
      + \bar \eta_k \psi_k ) \right)
      = (-1)^{\Sigma {\cal A} + \Sigma {\cal B}} \, \bar \eta_{{\cal B}} \eta_{{\cal A}} .
      \end{equation}
\end{lemma}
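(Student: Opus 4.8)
The plan is to compute the left-hand side directly, letting the nilpotency identity \eqref{defg2} select a single surviving monomial and then tracking the sign of the resulting Grassmann integration. The first step is to factorize the exponential. Since each $\bar\psi_k\eta_k$ and each $\bar\eta_k\psi_k$ is even and squares to zero, the morphism property $e^{A}e^{B}=e^{A+B}$ on even functions gives
\[
\exp\Bigl(\sum_{k=1}^N(\bar\psi_k\eta_k+\bar\eta_k\psi_k)\Bigr)=\prod_{k=1}^N(1+\bar\psi_k\eta_k)(1+\bar\eta_k\psi_k),
\]
so that, upon expansion, site $k$ contributes exactly one of the four factors $1$, $\bar\psi_k\eta_k$, $\bar\eta_k\psi_k$, or $\bar\psi_k\eta_k\bar\eta_k\psi_k$.

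Next I would invoke nilpotency exactly as in the proof of Lemma \ref{detM}: the integral vanishes unless every $\psi_k$ and every $\bar\psi_k$ appears exactly once. The prefactor $\psi_{\mathcal{B}^c}\bar\psi_{\mathcal{A}^c}$ already provides $\psi_k$ for $k\in\mathcal{B}^c$ and $\bar\psi_k$ for $k\in\mathcal{A}^c$, and any repeated variable annihilates the term. This leaves a unique admissible choice at each site, dictated by which of the four index classes $\mathcal{A}^c\cap\mathcal{B}^c$, $\mathcal{A}\cap\mathcal{B}^c$, $\mathcal{A}^c\cap\mathcal{B}$, $\mathcal{A}\cap\mathcal{B}$ contains $k$; namely $1$, $\bar\psi_k\eta_k$, $\bar\eta_k\psi_k$, and $\bar\psi_k\eta_k\bar\eta_k\psi_k$ respectively. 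The $\eta,\bar\eta$ content thereby produced is $\eta_k$ for $k\in\mathcal{A}$ and $\bar\eta_k$ for $k\in\mathcal{B}$, which is already the variable content of $\bar\eta_{\mathcal{B}}\eta_{\mathcal{A}}$; only the scalar sign remains.

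The computational engine for that sign is the single-site integral, read off from \eqref{rule_int} and the Examples. Writing the four possible prefactor contents $g$ at a given site as a tuple (and dropping the index), a short computation gives
\[
\int d\bar\psi\,d\psi\;(\psi\bar\psi,\,\psi,\,\bar\psi,\,1)\,(1+\bar\psi\eta)(1+\bar\eta\psi)=(1,\,\eta,\,\bar\eta,\,\bar\eta\eta).
\]
I would then regroup the unique surviving monomial site by site, evaluate each site with this rule, and finally reorder the produced $\eta,\bar\eta$ factors — which emerge in increasing order of $k$ — into the prescribed form $\bar\eta_{\mathcal{B}}\eta_{\mathcal{A}}$.

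The hard part is the sign bookkeeping, and I expect it to be the only real obstacle. Three sources of signs must be combined: unshuffling the interleaved prefactor $\psi_{\mathcal{B}^c}\bar\psi_{\mathcal{A}^c}$ into per-site blocks; commuting the odd site-blocks (those with $k$ in $\mathcal{A}\setminus\mathcal{B}$ or $\mathcal{B}\setminus\mathcal{A}$, each of degree three, of which there is an even number) past one another and past the measure differentials when factorizing the integral; and the final reordering of the $\eta,\bar\eta$ factors. I would evaluate the product of these signs by a direct inversion count organized along the four index classes above, using as a sanity check the diagonal case $\mathcal{A}=\mathcal{B}$, where the net sign collapses to $+1=(-1)^{2\Sigma\mathcal{A}}$. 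A convenient tool is the parity identity $\Sigma\mathcal{A}+\Sigma\mathcal{B}\equiv\Sigma\mathcal{A}^c+\Sigma\mathcal{B}^c \pmod 2$, which holds because $\Sigma\mathcal{A}+\Sigma\mathcal{A}^c+\Sigma\mathcal{B}+\Sigma\mathcal{B}^c=N(N+1)$ is even, and which converts the complement indices carried by the prefactor into the indices $\Sigma\mathcal{A}+\Sigma\mathcal{B}$ appearing in the claim; the residual inversions then assemble to exactly $(-1)^{\Sigma\mathcal{A}+\Sigma\mathcal{B}}$.
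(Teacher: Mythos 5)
Your proposal is correct, and its core mechanism is the same as the paper's: expand the exponential, let nilpotency \eqref{defg2} kill every term except one, and then compute a sign. All the steps you actually execute check out: the factorization $\exp\bigl(\sum_k(\bar\psi_k\eta_k+\bar\eta_k\psi_k)\bigr)=\prod_k(1+\bar\psi_k\eta_k)(1+\bar\eta_k\psi_k)$, the classification of the unique admissible term at each site by the four classes ${\cal A}^c\cap{\cal B}^c$, ${\cal A}\cap{\cal B}^c$, ${\cal A}^c\cap{\cal B}$, ${\cal A}\cap{\cal B}$, the single-site integral table, and the parity identity $\Sigma{\cal A}+\Sigma{\cal B}\equiv\Sigma{\cal A}^c+\Sigma{\cal B}^c \pmod 2$. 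The difference is organizational, and it matters for how much work is left at the end. By localizing the integral site by site you create three interacting sign sources (unshuffling the interleaved prefactor, commuting the odd degree-3 site blocks past each other and past the differentials, and reordering the produced $\eta$'s into $\bar\eta_{{\cal B}}\eta_{{\cal A}}$), and you leave their assembly as a plan rather than a computation. The paper short-circuits exactly this step: it identifies the unique surviving monomial globally as $\bigl(\prod_{i=1}^p\bar\psi_{a_i}\eta_{a_i}\bigr)\bigl(\prod_{i=1}^p\bar\eta_{b_i}\psi_{b_i}\bigr)$ and observes that, since products of two generators are even and commute freely, this monomial can be rewritten with no net sign as $(\psi_{{\cal B}}\bar\psi_{{\cal A}})(\bar\eta_{{\cal B}}\eta_{{\cal A}})$; the $\eta$-dependence then exits the $\psi$-integral intact, and the whole sign is concentrated in the single reordering integral $\int d\bar\psi d\psi\,(\psi_{{\cal B}^c}\bar\psi_{{\cal A}^c})(\psi_{{\cal B}}\bar\psi_{{\cal A}})=(-1)^{\Sigma{\cal A}+\Sigma{\cal B}}$, an evaluation of the type \eqref{rule_int}. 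To be fair, the paper asserts this last identity rather than carrying out the inversion count, so neither proof fully executes the final combinatorial step; but the paper reduces it to one clean statement, whereas your route requires recombining three separate contributions. What your version buys is transparency of the nilpotency mechanism, in the same spirit as the proof of Lemma \ref{detM}; what the paper's version buys is a nearly sign-free path to the answer, and the step you deferred is precisely the one its even-block rearrangement makes almost trivial.
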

\begin{proof}
When developing the exponential in \eqref{lema} above, the only term which leads to a non-vanishing contribution is the one containing:
\begin{equation}
    \left ( \prod_{i=1}^p \bar \psi_{a_i} \eta_{a_i}  \right)
     \left ( \prod_{i = 1}^p \bar \eta_{b_i} \psi_{b_i}  \right) = \prod_{i=1}^p \bar \psi_{a_i} \eta_{a_i}  \bar\eta_{b_i} \psi_{b_i} 
     =  ( \psi_{\cal B} \bar\psi_{\cal A}) (  \bar \eta_{\cal B}  \eta_{\cal A}).
\end{equation}
The Grassmann integration on the lef-hand side of \eqref{lema} leads to $( \eta_{A} \bar \eta_{B} )$ multiplied by the sign: 
\begin{equation}
\int  d\bar \psi d\psi \,  (\psi_{{\cal B}^c} \bar \psi_{{\cal A}^c}) \, (\psi_{\cal B} \bar\psi_{\cal A} ) = (-1)^{\Sigma {\cal A} + \Sigma {\cal B}}.
\end{equation}
This concludes the proof.
\end{proof}
\medskip

\noindent
Expression \eqref{inainte} above thus becomes:
\begin{equation}
     \label{dupa}
      \int  d\eta d\bar\eta \,  
        \bar \eta_{{\cal B}} \eta_{{\cal A}} \, 
       \exp\left( \sum_{k,\ell=1}^N \bar \eta_k M^{-1}_{k\ell} 
      \eta_\ell \right) =  \int d\eta d\bar\eta \,  
      \bar \eta_{{\cal B}} \eta_{{\cal A}} \, \prod_{i=1}^N e^{ \bar\eta_i \eta_i } \prod_{k,\ell=1}^N e^{  - \bar \eta_k A_{k\ell} \eta_\ell}.
\end{equation}
Using now \eqref{ma}, this rewrites as
\begin{equation}\label{f}
(-1)^p \int d\bar\eta d\eta  \, \,  
      \bar \eta_{{\cal B}} \eta_{{\cal A}} \, \prod_{i = 1}^N (1 + \eta_i \bar\eta_i) \prod_{k,l = 1}^N (1 + A_{kl} \bar\eta_k \eta_l ).
\end{equation}
A similar analysis as the one of Lemma \ref{detM} then shows that the non-zero contributions to the integral are 
labelled by self-avoiding flows $(\mathbf{P}, \mathbf{C}) \in \mathcal{F}_{{\cal A},{\cal B}}$. Indeed, open paths are now allowed, but their source (resp. sink) vertices must be associated to a Grassmann variable $\bar\eta_{a_i}$ (resp. $\eta_{b_i}$) and therefore be in ${\cal A}$ (resp. in ${\cal B}$). The key argument is that, because of the Grassmann nilpotency condition \eqref{defg2}, the paths and cycles must be self-avoiding and pairwise vertex-disjoint!

The term indexed by the flow $(\mathbf{P}, \mathbf{C})$ is equal to $\mathrm{wt}(\mathbf{P}) \,\mathrm{wt}(\mathbf{C})$, up to a sign. By the same argument as in Lemma \ref{detM}, the term associated to $(\mathbf{P}, \mathbf{C})$ differs from the one associated to $(\mathbf{0},\emptyset)$ by a factor $\mathrm{sgn}(\mathbf{C})$. In the latter situation, one can relabel the variables $\eta_{b_i}$ and assume without loss of generality that $P_i$ connects $a_i$ to $b_i$ (for all $i$), and that a factor $\mathrm{sgn}(\mathbf{P})$ 
is 
included. 
The only difference with respect to the case studied in Lemma \ref{detM} is that we have now a permutation with $\vert\mathbf{P}\vert = p$ even cycles, yielding an extra factor $(-1)^p$ which cancels the one of formula \eqref{f}. Finally, the sign associated to a general $(\mathbf{P}, \mathbf{C}) \in\mathcal{P}_{{\cal A}{\cal B}}$ is equal to $\mathrm{sgn}(\mathbf{C}) \mathrm{sgn}(\mathbf{P})$, which concludes the proof. 
\end{proof}


%
%
%
%

\section{Transfer matrix approach}

In quantum field theory, the path integral represents a space time approach to the time evolution of a system, represented as a sum over paths. Accordingly, the 
LGV 
lemma is interpreted as the evolution of a system of fermions on a lattice that represents a discrete analogue of space-time. In some instances, it turns out that this evolution can also be described in another formalism based on singling out a time direction in space-time. In our case, this formalism applies to a particular class of graphs which are described below. The sum over paths is then  interpreted as a matrix element of an operator between an initial and a final state which are elements of a Hilbert space constructed as follows. We refer the reader to \cite{Itzykson} for some background  on statistical field theory.

Let us consider $N$ Grassmann variables $\chi_{1},\dots,\chi_{N}$.  
The scalar product is defined in analogy with the standard scalar product on holomorphic functions, using an integration over Grassmann variables
\begin{equation}
\langle f,g\rangle=
\int  d\overline{\chi}d\chi
\exp\big(-\overline{\chi}\chi\big)\,
\overline{f}(\overline{\chi})g(\chi)=
\sum_{k=0}^{N}
\frac{1}{k!}
\sum_{1< i_{1},\cdots,i_{k}\leq N}\overline{a}_{i_{1}\dots i_{k}}b_{i_{1}\dots i_{k}}.
\end{equation}
 
Moreover, given an $N\times N$ matrix $\tilde{T}$, one has:
\begin{equation}
\tilde T\!\cdot\!f(\chi)=\sum_{k=0}^{N}\frac{1}{k!}\sum_{1\leq i_{1},\dots,i_{k}\leq N}a_{i_{1}\dots i_{k}}
\big(\sum_{1\leq j_{1}\leq N}\tilde T_{i_{1}j_{1}}\chi_{j_{1}})\dots\big(\sum_{1\leq j_{k}\leq N}\tilde T_{i_{k}j_{k}}\chi_{j_{k}}\big).
\end{equation}
This action can also be written in terms of Grassmann integration as 
\begin{equation}
\tilde T\!\cdot\!f(\chi)=\int  d\overline{\eta}d\eta\,
\exp\big(-\overline{\eta}\eta\big)\,
\exp\big(\overline{\eta}\,\tilde T\, \chi\big)\,
f(\eta),
\end{equation}
where we have used the notation:
$$\overline{\eta}\,\tilde{T}\, \chi=\sum_{i,j=1}^N\overline{\eta}_{i}\tilde{T}_{ij}\chi_{j}.$$ 
Moreover, if $\tilde S$ is another $N\times N$ matrix, 
\begin{equation}
(\tilde S \, \tilde T)\!\cdot\!f(\chi)=
\int  
d\overline{\psi}d\psi
d\overline{\eta}d\eta\,
\exp\big(-\overline{\eta}\eta\big)
\exp\big(\overline{\eta}\,\tilde S\psi\big)
\exp\big(-\overline{\psi}\psi\big)\,
\exp\big(\overline{\psi}\,\tilde T\chi\big)f(\eta).\label{compose:eq}
\end{equation}
Consider now a sequence of $n$ weighted directed graphs $G_{1},\dots,G_{n}$ each having $N$ vertices labeled by an integer $i\in\left\{1,\dots,N\right\}$. Loops, multiple edges and isolated vertices are allowed. 
We denote by $w_{m,ij}$ the weight of an edge oriented from vertex $i$ to vertex $j$ in $G_{m}$, with the convention that the weight vanishes if there is no such an edge. We label the $nN$ vertices of the disjoint union $G_{1}\cup\cdots\cup G_{n}$ by pairs $(i,m)$ where the second index refers to the graph $G_{m}$ and the first one to the vertex $i$ in $G_{m}$.   

We define the graph $G_{1}\rightarrow G_{2}\rightarrow \cdots\rightarrow G_{n}$ by adding $N(n-1)$ edges to the disjoint union $G_{1}\cup\cdots\cup G_{n}$ see Fig. \ref{illustrations:fig}.a. These $N(n-1)$ edges connect the vertex $(i,m)$ to the vertex $(i,m+1)$, for all $m\in\left\{1,\dots , n-1\right\}$ and $i\in\left\{1,\dots , N\right\}$ with a weight $1$. The weighted adjacency matrix of $G_{1}\rightarrow G_{2}\rightarrow \cdots\rightarrow G_{n}$ is given by
\begin{equation}
A_{(i,m),(j,p)}:=
\begin{cases}
w_{i,j}&\text{if }p=m\\
1&\text{if }p=m+1\text{ and $i=j$}\\
0&\text{otherwise}.
\end{cases}
\end{equation}

\begin{figure}
\begin{tabular}{ccc}
\qquad\parbox{2.8cm} {\includegraphics[width=2.8cm]{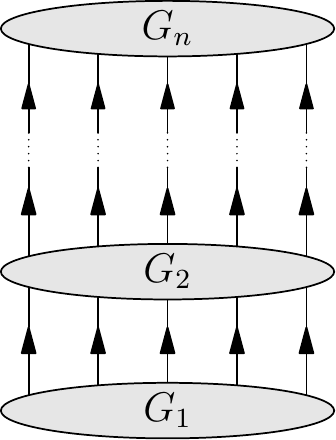}}\smallskip
\qquad
&
\qquad\parbox{6cm}{\includegraphics[width=6cm]{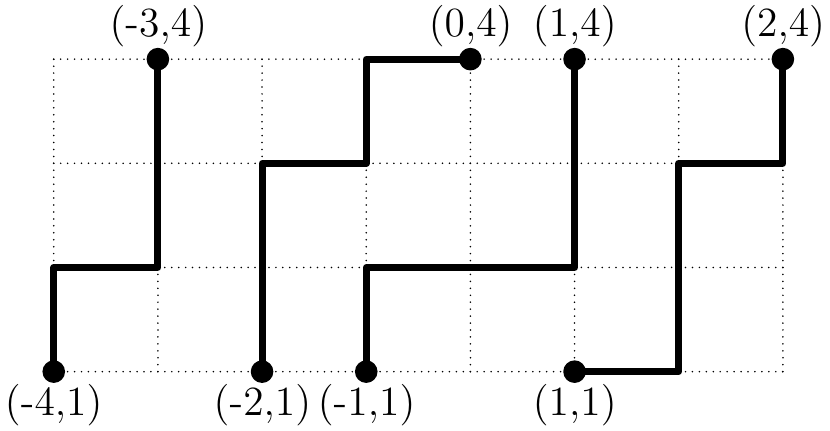}}
\qquad&\qquad
\parbox{2.5cm}{\includegraphics[width=2.5cm]{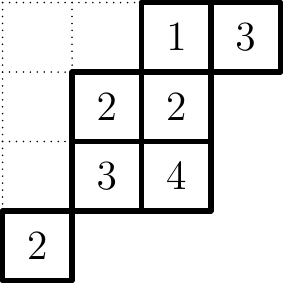}}\\\

\ref{illustrations:fig}.a. 
Graph $G_{1}\rightarrow G_{2}\rightarrow\cdots\rightarrow G_{n}$& 
\ref{illustrations:fig}.b. 
Non intersecting lattice paths & 
1.c. 
Skew Young table
\end{tabular}
\caption{Some illustrations}
\label{illustrations:fig}
\end{figure}
The previous construction is motivated by the following theorem, relating $k$ non intersecting paths in $G_{1}\rightarrow G_{2}\rightarrow \cdots\rightarrow G_{n}$, starting at vertices ${\cal A}\in G_{1}$ and ending at vertices ${\cal B}\in G_{n}$, to a $k\times k$ minor in a $N\times N$ matrix constructed using the weighted adjacency matrices $A_{i}$ of $G_{i}$.
\begin{theorem}[]
One has
\begin{multline}
\sum_{\text{non intersecting paths ${\cal P}_{1},\dots,{\cal P}_{k}$ ${\cal A}\rightarrow {\cal B}$}\atop \text{ and cycles ${\cal C}_{1},\dots,{\cal C}_{r}$ in 
     $G_{1}\rightarrow G_{2}\rightarrow \cdots\rightarrow G_{n}$}}
     (-1)^{\epsilon({\cal A},{\cal B}) }
(-1)^{r}w({\cal P}_{1})\cdots w({\cal P}_{k})
w({\cal C}_{1})\cdots w({\cal C}_{r})\\
=\qquad\det(1-A_{n})\cdots \det(1-A_{1}) 
\det \big[(1-A_{n})^{-1}\cdots (1-A_{1})^{-1}\big]_{{\cal A}{\cal B}},
\label{GVtransfert1:eq}
\end{multline}
with $\epsilon({\cal A},{\cal B})$ the signature of the permutation of the labels of the vertices in ${\cal B}$  with respect to those in ${\cal A}$ and $\det M_{{\cal A},{\cal B}}$ is the determinant restricted to the lines corresponding to ${\cal A}$ and columns to ${\cal B}$.
\label{GVtransfert:thm}
\end{theorem}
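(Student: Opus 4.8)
The plan is to derive this transfer-matrix identity as a direct consequence of the general LGV formula of Theorem \ref{GV:thm}, applied to the single large graph $G_{1}\rightarrow G_{2}\rightarrow\cdots\rightarrow G_{n}$, followed by an elementary block-matrix computation. First I would record the block structure of its weighted adjacency matrix $A$: ordering the $nN$ vertices layer by layer, $A$ is block-bidiagonal, with diagonal blocks equal to the weighted adjacency matrices $A_{m}$ of the $G_{m}$ and superdiagonal blocks equal to $\mathrm{Id}_{N}$ (the weight-$1$ inter-layer edges). Consequently $\mathrm{Id}-A$ is block upper-triangular, with diagonal blocks $\mathrm{Id}_{N}-A_{m}$ and superdiagonal blocks $-\mathrm{Id}_{N}$.

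Next I would identify the combinatorial objects appearing in \eqref{gv} for this large graph. Since every inter-layer edge points strictly forward (from layer $m$ to layer $m+1$), no closed walk can use one; hence every self-avoiding cycle of $G_{1}\rightarrow\cdots\rightarrow G_{n}$ lies entirely inside a single $G_{m}$, and the collections of such cycles are exactly the $\mathbf{C}=({\cal C}_{1},\dots,{\cal C}_{r})\in\mathcal{C}$. The open paths, on the other hand, run from the sources ${\cal A}$ in layer $1$ to the sinks ${\cal B}$ in layer $n$, threading through all layers; as the inter-layer edges carry weight $1$, the weight of such a path is unchanged by the construction. A self-avoiding flow $(\mathbf{P},\mathbf{C})\in\mathcal{F}_{{\cal A},{\cal B}}$ of the large graph is therefore precisely a non-intersecting family of paths ${\cal A}\rightarrow{\cal B}$ together with disjoint cycles, so the numerator of \eqref{gv} for the large graph is exactly the left-hand side of \eqref{GVtransfert1:eq}, the signs matching through $\mathrm{sgn}(\mathbf{C})=(-1)^{r}$ and $\mathrm{sgn}(\mathbf{P})=(-1)^{\epsilon({\cal A},{\cal B})}$.

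It then remains to evaluate the two determinantal factors produced by Theorem \ref{GV:thm}. Multiplying \eqref{gv} through by its denominator and using Lemma \ref{detM} to identify that denominator with $\det(\mathrm{Id}-A)$ gives
\begin{equation*}
\sum_{(\mathbf{P},\mathbf{C})\in\mathcal{F}_{{\cal A},{\cal B}}}(-1)^{\epsilon({\cal A},{\cal B})}(-1)^{r}\,\mathrm{wt}(\mathbf{P})\,\mathrm{wt}(\mathbf{C})=\det(\mathrm{Id}-A)\,\det\big(M_{{\cal A}{\cal B}}\big),
\end{equation*}
with $M=(\mathrm{Id}-A)^{-1}$. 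Block triangularity immediately yields $\det(\mathrm{Id}-A)=\prod_{m=1}^{n}\det(\mathrm{Id}-A_{m})=\det(1-A_{n})\cdots\det(1-A_{1})$, the required prefactor. For the minor, note that ${\cal A}$ sits in layer $1$ (rows) and ${\cal B}$ in layer $n$ (columns), so $M_{{\cal A}{\cal B}}$ is the ${\cal A}{\cal B}$-submatrix of the $(1,n)$ block of $M$. I would compute that block by solving $(\mathrm{Id}-A)M=\mathrm{Id}$ one block-row at a time; the bidiagonal recursion telescopes to a product of the inverse diagonal blocks, identifying the $(1,n)$ block with the ordered product $(\mathrm{Id}-A_{1})^{-1}\cdots(\mathrm{Id}-A_{n})^{-1}$, which is the resolvent matrix appearing in \eqref{GVtransfert1:eq}. (One could equivalently reach the same factorization by iterating the transfer-operator composition formula \eqref{compose:eq}.)

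The step I expect to require the most care is the sign and ordering bookkeeping rather than any hard estimate: confirming that the permutation sign $\mathrm{sgn}(\mathbf{P})$ of the non-intersecting paths in the large graph coincides with the stated $(-1)^{\epsilon({\cal A},{\cal B})}$, and fixing the order of the factors in the product of resolvents, which depends on the adopted row/column (equivalently edge-orientation) convention and may call for a transpose to match \eqref{GVtransfert1:eq} exactly. Everything else is a transcription of Theorem \ref{GV:thm} together with the triangular block algebra above.
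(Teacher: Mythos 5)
Your proof is correct, but it follows a genuinely different route from the paper's. The paper proves Theorem \ref{GVtransfert:thm} by induction on the number $n$ of layers: the base case $n=1$ is Theorem \ref{GV:thm}, and the inductive step integrates out the pairs of Grassmann variables sitting at vertices $(i,m)$ and $(i,m+1)$ via the operator-composition formula \eqref{compose:eq}, so the argument stays entirely inside the Grassmann/transfer-matrix formalism. You instead apply Theorem \ref{GV:thm} once, to the full $nN$-vertex graph $G_{1}\rightarrow\cdots\rightarrow G_{n}$, observe that every cycle is confined to a single layer (inter-layer edges point strictly forward) and that self-avoiding flows are exactly the path-plus-cycle families on the left of \eqref{GVtransfert1:eq}, and then finish with block linear algebra: $\mathrm{Id}-A$ is block upper bidiagonal, so $\det(\mathrm{Id}-A)=\prod_{m}\det(\mathrm{Id}-A_{m})$, and the block recursion $(\mathrm{Id}-A_{m})M_{m,p}=\delta_{m,p}\,\mathrm{Id}+M_{m+1,p}$ telescopes to exhibit the $(1,n)$ block of $(\mathrm{Id}-A)^{-1}$ as an ordered product of the resolvents $(\mathrm{Id}-A_{m})^{-1}$. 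Both arguments are legitimate; yours is arguably more self-contained (no induction, no use of \eqref{compose:eq}, the only Grassmann input being Theorem \ref{GV:thm} and Lemma \ref{detM}), and it makes explicit what the paper only states informally, namely that the $N\times N$ formula is the $nN\times nN$ LGV formula compressed by block algebra; the paper's inductive route, in exchange, is the one that sets up the transfer-matrix/physics picture exploited in the following sections. One caveat you rightly flag: with the conventions of Section 2 (rows index sources, columns index sinks), the telescoping yields the block $(\mathrm{Id}-A_{1})^{-1}\cdots(\mathrm{Id}-A_{n})^{-1}$, whereas \eqref{GVtransfert1:eq} displays the reversed order $(1-A_{n})^{-1}\cdots(1-A_{1})^{-1}$; the two agree only up to a transposed convention (or when the $A_{m}$ commute, as in the Schur-function application where $A_{m}=x_{m}T$). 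This is a convention mismatch in the statement itself, not a gap in your argument.
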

\begin{proof}
The result 
is proved
by induction on $n$. For $n=1$, the statement corresponds to Theorem  \ref{GV:thm}. Then, one passes from $n$ to $n+1$ by integrating pairs of variables between vertices $(i,m)$ and $(i,m+1)$ and the use of  \eqref{compose:eq}. 
\end{proof}

In statistical physics, a homogeneous term of degree $k$ in ${\cal H}$ represents a state of $k$ fermions occupying the vertices of $G_{m}$ at time $m$. The anti-commutation relations express Pauli exclusion principle that states that two fermions cannot occupy the same vertex. The operator $(1-A_{m})^{-1}$ (multiplied by a power of its determinant) transforms this state into another $k$ fermion state at time $m+1$, on the vertices of $G_{m+1}$. Thus, $T_{m}$ represents a discrete time evolution;
this matrix is known in physics as the {\it transfer matrix}. 

The interest of this result comes from the evaluation of the sum over paths by a minor in an $N\times N$ matrix instead of an $nN\times nN$ matrix as would result from an application of the 
LGV 
lemma. In the next two sections we show how this result can be used in the theory of Schur functions. Other related applications of fermionic techniques can be found in \cite{Nadeau} and \cite{Zinn}. 

\section{An application to Schur functions}
\label{transfert:sec}

Given an integer $k$, a partition of $k$ is a decreasing sequence $\lambda_{1}\geq\dots\geq\lambda_{r}$ of $r$ integers such that $\lambda_{1}+\cdots+\lambda_{r}=k$. A partition is conveniently represented by a Young diagram denoted $\lambda$ and made of $r$ left justified  rows, the $k^{\text{th}}$ row containing $\lambda_{k}$ boxes, with the longer rows on the top of the shorter ones. We set $|\lambda|:=\lambda_{1}+\dots+\lambda_{r}$. 

Given a second Young diagram $\mu$ with $r'$ rows, we write $\mu\leq \lambda$ if $r'\leq r$ and if for all $i\left\{1,\dots,r\right\}$, $\mu_{i}\leq \lambda_{i}$. When $\mu\leq\lambda$, the skew Young diagram $\lambda/\mu$ is constructed by removing the $\mu_{i}$ first left boxes in the line $i$ of $\lambda$ for all $i$. We also consider the empty Young diagram and $\lambda/\emptyset=\lambda$ while $\lambda/\lambda=\emptyset$.  We further set $\mu_{i}=0$ for $i\geq r'$.

A semi standard (skew) Young tableau 
(SSYT) of shape $\lambda/\mu$ is a filling of the Young diagram $\lambda/\mu$ by some integers in $\left\{1,\dots,n\right\}$ in such a way that they are increasing along the columns and non decreasing along the rows. To each of these integers we associate an  indeterminate $x_{m}$ and the Schur function is defined as
\begin{equation}
s_{\lambda/\mu}(x):=\sum_{\text{skew Young Tableau 
}\atop\text{ of shape $\lambda/\mu$}}
\prod_{1\leq m\leq n} x_{m}^{k_{m}},
\end{equation}
where $k_{m}$ is number of times the integer $m$ appears in the SSYT, see Fig.  \ref{illustrations:fig}.c. 

It is known (see \cite{GV})  that $s_{\lambda/\mu}(x)$ can be constructed using $r$ non intersecting lattice paths as follows. Define a graph $G$  with vertices labelled $(i,m)$ with $i$ and $m$ positive integers and oriented  edges from $(i,m)$ to $(i+1,m)$ and from $(i,m)$ to $(i,m+1)$. 
The graph $G$ is conveniently visualized as a two dimensional square lattice with arrows pointing upwards and rightwards. Although infinite, at any stage of the computation only a finite number of vertices are involved. We leave the precise range of $i$ unspecified for notational convenience and assume $1\leq m\leq n$ unless otherwise stated. Then, the skew Schur functions can be written as a sum over $r$ non intersecting paths on G,
\begin{equation}
s_{\lambda}(x)=\sum_{\text{non intersecting lattice paths ${\cal P}_{1},\dots,{\cal P}_{r}$}\atop
{\cal P}_{i}:\,(\mu_{i}-i+l,1)\rightarrow (\lambda_{i}-i+l,n)}W({\cal P}_{1})\cdots W({\cal P}_{i}).
\end{equation}
where $l$ is a global translation parameter that does not affect the result, because of translation invariance. 
The weight of a path is again given by the product of the weight of its edges. The weight of an horizontal edge from $(i,m)$ to $(i+1,m)$ is $x_{m}$ and the weight of all vertical edges is 1.

The graph $G$ can be written as $G=G_{1}\rightarrow G_{2}\rightarrow \cdots\rightarrow G_{n}$ with all $G_{m}$ isomorphic to a one dimensional lattice with edges oriented to the right, i.e. from $i$ to $i+1$. The weighted adjacency matrix is made of right translations $T$ (defined by $T_{ij}=1$ if $j=i+1$ and $0$ otherwise) multiplied by $x_{m}$, such that $1-A_{m}=1-x_{m}T$.
Its inverse reads $(1-A_{m})^{-1}=\sum_{p\geq 0}(x_{m})^{p}\,T^{p}$. One then has 
\begin{equation}
(1-A_{n})^{-1}\cdots(1-A_{1})^{-1}=\sum_{k}h_{k}(x)T^{k}\label{inverse:eq},
\end{equation}
with $h_{k}(x)$ the complete symmetric functions of $x_{1},\dots,x_{n}$ of degree $k$,
\begin{equation}
h_{k}(x)=\sum_{k_{1}+\cdots k_{n}=k}x_{1}^{k_{1}}\cdots x_{n}^{k_{n}}.
\end{equation}
We can apply Theorem \ref{GVtransfert:thm} (with all $G_{m}$ acyclic so that there is no contribution of cycles) and equation \eqref{GVtransfert1:eq} yields
the celebrated Jacobi-Trudi identity
\begin{equation}
s_{\lambda/\mu}(x)=
\det \big(h_{\lambda_{j}-\mu_{i}+i-j}(x)\big)_{1\leq,i,j\leq r},\label{JacobiTrudi1:eq}
\end{equation}
for a skew partition with $r$ rows, with the convention that $h_{j-i}(x)=0$ if $i>j
$.

From a physical point of view, we may associate to a partition an element of ${\cal H}$ defined by $|\lambda\rangle=\chi_{{}\atop\lambda_{1}-1+l}\cdots \chi_{{}\atop\lambda_{r}-r+l}$. Introducing $U(x)=(1-x_{n}T)^{-1}\cdots(1-x_{1}T)^{-1}$, Schur functions are transition amplitudes between two such states, $s_{\lambda/\mu}(x)=\langle \lambda|U(x)|\mu\rangle$, which is non zero only if $\mu\leq\lambda$. 

If we separate the variables $x$ into two disjoints sets denoted $x'$ and $x''$, one has:
$U(x)=U(x')U(x'')$. This comes from the fact that all these operators commute. The relation 
\begin{equation}
\langle\lambda|U(x)|\mu\rangle=\sum_{
\mu\leq\nu\leq\lambda}
\langle\lambda|U(x')|\nu\rangle\langle\nu|U(x'')|\mu\rangle
\end{equation}
then leads to the convolution identity:
\begin{equation}
s_{\lambda/\mu}(x)=\sum_{
\mu\leq\nu\leq \lambda}
s_{\lambda/\nu}(x')s_{\nu/\mu}(x'').
\end{equation}
This identity 
follows from
 the LGV
 lemma. From a lattice point of view, this is a vertical composition. In the next section, we will derive an horizontal composition from the multiplication law 
 $$U^{a+b}(x)=U^{a}(x)U^{b}(x).$$


\section{A one parameter extension of Schur polynomials}

Let us introduce the following symmetric polynomials 
 \begin{equation}
S_{k}(a,x)=
\sum_{k_{1}+\cdots k_{n}=k}x_{1}^{k_{1}}\dots x_{n}^{k_{n}}
\prod_{1\leq m\leq n}\frac{a(a+1)\dots(a+k_{m}-1)}{k_{m}!}.
\end{equation}
For $a=1$ we recover the complete homogeneous polynomials $S_{k}(1,x)=h_{k}(x)$. For example,  
\begin{align}
S_{1}(a,x)&=
a
\sum_{1\leq m\leq n} x_{m},
\qquad\qquad
S_{2}(a,x)=
\frac{a(a+1)}{2}\sum_{1\leq m\leq n} x^{2}_{m}
+a^{2}\sum_{1\leq p<m\leq n} x_{m}x_{p},\\
S_{3}(a,x)&=
\frac{a(a+1)(a+2)}{6}\sum_{1\leq m\leq n} x^{3}_{m}+
\frac{a^{2}(a+1)}{2}\!\!\!\!\sum_{1\leq p<m\leq n}\!\!\!\! (x^{2}_{m}
x_{p}+x_{m}x_{p}^{2})\,\,+\,\,a^{3}\!\!\!\!\sum_{1\leq q<p<m\leq n}\!\!\!\! x_{m}x_{p}x_{q}.
\end{align}
These polynomials appear in the expansion of  $U^{a}(x)=(1-x_{n}T)^{-a}\cdots (1-x_{1}T)^{-a}$, generalizing \eqref{inverse:eq},
\begin{equation}
U^{a}(x)=
\sum_{ k\geq 0}
S_{k}(a,x) T^{k},
\end{equation}
which follows from writing $(1-x_{m}T)^{-a}=\frac{1}{\Gamma(a)}\int_{0}^{\infty}dt_{m}t_{m}^{a-1}
\exp-t_{m}(1-x_{m}T)$.
Using $U^{a}(x)=(1-x_{n}T)^{-a}\cdots (1-x_{1}T)^{-a}$ in equation \eqref{GVtransfert1:eq} instead of $U(x)=(1-x_{n}T)^{-1}\cdots (1-x_{1}T)^{-1}$ leads to a one parameter generalization of the Schur function. The latter are defined by replacing the $h_{k}(x)$ by $S_{k}(a,x)$ in the Jacobi-Trudi identity \eqref{JacobiTrudi1:eq}.
\begin{definition}[One parameter extension of Schur polynomials]
Let 
\begin{equation} 
s_{\lambda/\mu}(a,x):=
\det \big(S_{\lambda_{j}-\mu_{i}+i-j}(a,x)\big)_{1\leq i,j\leq r}.
\label{extendedSchur:eq}
\end{equation}
We use here the convention $S_{0}(a,x)=1$ and $S_{k}(a,x)=0$ for $k<0$.
\end{definition}
Schur functions are recovered for $a=1$, $s_{\lambda/\mu}(1,x)=s_{\lambda/\mu}(x)$. Theorem  \ref{GVtransfert:thm} then implies that $S_{\lambda/\mu}$ can also be written as a sum over $r$ non intersecting lattice paths for a skew diagram with $r$ rows. However, since we use $(1-x_{m}T)^{a}$ instead of $(1-x_{m}T)$, for $j>i$ there is an edge from $(i,m)$ to $(j,m)$ weighted by 
\begin{equation}
w_{(i,m)\rightarrow(j,m)}=(-1)^{j-i+1}\frac{a(a-1)\dots (a-j+i+1)}{(j-i)!}\,x_{m}^{j-i}.  
\end{equation}
In that case, the paths $(i,m)\rightarrow(i,p)\rightarrow(j,p)\rightarrow(j,q)$ and $(k,m)\rightarrow(k,q)$ for $i<k<j$ do not intersect but contribute with an extra $-1$ because the order of their endpoints have been reversed. 


\begin{example}[$s_{(2,1)}(a,x)$ as a sum over paths]

The paths contributing to $s_{(2,1)}(a,x)$ join vertices $(1,1)$ and $(2,1)$ on on side 
and $(2,n)$ and $(4,n)$ on the other side. 

\begin{center}
\begin{tabular}{|c|c|}
\hline
\begin{minipage}{10cm}

\begin{center}
\smallskip
$(1,1)\rightarrow (1,m)\rightarrow(2,m)\rightarrow (2,n)$\\
 $(2,1)\rightarrow (2,p)\rightarrow(3,p)
\rightarrow(3,q)\rightarrow(4,q)\rightarrow(4,n)$
\end{center}
\end{minipage}&
\begin{minipage}{6cm}
\begin{center}
\smallskip
${\displaystyle
a^{3}\sum_{1\leq p<m\leq n,\atop 1\leq p\leq q\leq n}x_{m}x_{p}x_{q}}$
\smallskip
\end{center}

\end{minipage}\\
\hline



\begin{minipage}{10cm}
\smallskip
\begin{center}
$(1,1)\rightarrow (1,m)\rightarrow(2,m)\rightarrow (2,n)$\\
 $(2,1)\rightarrow (2,p)\rightarrow(4,p)
\rightarrow(4,n)$
\end{center}
\end{minipage}&
\begin{minipage}{6cm}
\begin{center}
\smallskip
${\displaystyle
-\frac{a^{2}(a-1)}{2}\sum_{1\leq p <m\leq n}x_{m}x_{p}^{2}}$
\smallskip
\end{center}

\end{minipage}
\\
\hline
\begin{minipage}{10cm}
\smallskip
\begin{center}
$(1,1)\rightarrow (1,m)\rightarrow(4,m)\rightarrow (4,n)$\\
 $(2,1)\rightarrow (2,n)$
\end{center}
\end{minipage}&
\begin{minipage}{6cm}
\begin{center}
\smallskip
${\displaystyle
-\frac{a(a-1)(a-2)}{6}\sum_{1\leq m\leq n}x_{m}^{3}}$
\smallskip
\end{center}
\end{minipage}
\\
\hline
\begin{minipage}{10cm}
\smallskip
\begin{center}
$(1,1)\rightarrow (1,m)\rightarrow(3,m)\rightarrow(3,p)\rightarrow(4,p)\rightarrow (4,n)$\\
 $(2,1)\rightarrow (2,n)$
\end{center}
\end{minipage}&
\begin{minipage}{6cm}
\begin{center}
\smallskip
${\displaystyle
+\frac{a^{2}(a-1)}{2}\sum_{1\leq m \leq p\leq n}x_{m}^{2}x_{p}}$
\smallskip
\end{center}
\end{minipage}
\\
\hline
\end{tabular}
\end{center}
For Schur functions, the last three contributions are absent $(a=1)$, since they involve horizontal segments of length $2$ and $3$. In the last two rows there is a extra sign because of the interchange of endpoints. 
\begin{align}
s_{\includegraphics[width=0.3cm]{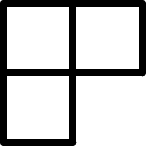}}(a,x)=
\det\begin{pmatrix}
S_{2}(a,x)&1\\S_{3}(a,x)&S_{2}(a,x)
\end{pmatrix}
=\frac{a(a^{2}-1)}{3}\sum_{1\leq m\leq n}x_{m}^{3}
+\,\,\,a^{2}\!\!\!\sum_{1\leq p< m\leq n \atop 1\leq p\leq q \leq n}\!\!\!\!x_{m}x_{p}x_{q}.
\end{align}
\end{example}

The main interest of this extension of Schur polynomials is the following convolution identity:

\begin{theorem}[Convolution identity] One has
\begin{equation}
s_{\lambda/\mu}(a+b,x)
=
\sum_{\nu\,\text{partition}\atop
\mu\leq \nu\leq \lambda}
s_{\lambda/\nu}(a,x)
s_{\nu/\mu}(b,x).
\label{convolutionSchur:eq}.
\end{equation}
Note that, for the empty partition, one has: $s_{\emptyset}(a,x)=1$.
\end{theorem}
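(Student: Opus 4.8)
The plan is to realize the extended polynomials $s_{\lambda/\mu}(a,x)$ as transition amplitudes in the fermionic Hilbert space of Section~\ref{transfert:sec}, and then to deduce the convolution identity from the operator factorization $U^{a+b}(x)=U^{a}(x)U^{b}(x)$ together with the insertion of a complete set of intermediate states, exactly mirroring the derivation given at the end of Section~\ref{transfert:sec} for $a=1$.

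First I would establish the analogue of $s_{\lambda/\mu}(x)=\langle\lambda|U(x)|\mu\rangle$ for the one-parameter family, namely
$$s_{\lambda/\mu}(a,x)=\langle\lambda|U^{a}(x)|\mu\rangle.$$
This rests on the expansion $U^{a}(x)=\sum_{k\geq0}S_{k}(a,x)T^{k}$, which plays here precisely the role that $U(x)=\sum_{k}h_{k}(x)T^{k}$ played for ordinary Schur functions. Since $(T^{k})_{ij}=\delta_{j,i+k}$, the single-particle matrix of $U^{a}(x)$ has entries $S_{j-i}(a,x)$, and the transfer-matrix computation of Theorem~\ref{GVtransfert:thm} expresses the amplitude $\langle\lambda|U^{a}(x)|\mu\rangle$ as the minor $\det\big(S_{\lambda_{j}-\mu_{i}+i-j}(a,x)\big)_{1\leq i,j\leq r}$, which is exactly Definition~\eqref{extendedSchur:eq}. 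Being a function of $T$ alone, $U^{a}(x)$ preserves the fermion number, so it maps the degree-$r$ subspace to itself.

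Next I would use the multiplicative law $U^{a+b}(x)=U^{a}(x)U^{b}(x)$, which holds factor by factor from $(1-x_{m}T)^{-(a+b)}=(1-x_{m}T)^{-a}(1-x_{m}T)^{-b}$, all factors commuting since they are power series in the single operator $T$. Inserting a resolution of the identity on the subspace of fixed fermion number $r$,
$$\mathrm{Id}=\sum_{\nu}|\nu\rangle\langle\nu|,$$
with $\nu$ running over partitions with at most $r$ parts, gives
$$s_{\lambda/\mu}(a+b,x)=\langle\lambda|U^{a}(x)U^{b}(x)|\mu\rangle=\sum_{\nu}\langle\lambda|U^{a}(x)|\nu\rangle\,\langle\nu|U^{b}(x)|\mu\rangle=\sum_{\nu}s_{\lambda/\nu}(a,x)\,s_{\nu/\mu}(b,x).$$
The restriction to $\mu\leq\nu\leq\lambda$ then follows from the same vanishing conditions as for $a=1$: the convention $S_{k}(a,x)=0$ for $k<0$ (equivalently, the absence of valid non-intersecting path configurations) forces $\langle\lambda|U^{a}(x)|\nu\rangle=s_{\lambda/\nu}(a,x)$ to vanish unless $\nu\leq\lambda$, and $\langle\nu|U^{b}(x)|\mu\rangle=s_{\nu/\mu}(b,x)$ to vanish unless $\mu\leq\nu$.

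The completeness relation itself rests on the orthonormality $\langle\lambda|\mu\rangle=\delta_{\lambda\mu}$: the map $\nu\mapsto(\nu_{1}-1+l,\dots,\nu_{r}-r+l)$ sends partitions with $r$ parts bijectively to strictly decreasing index tuples, hence to the monomial basis of the degree-$r$ subspace, which is orthonormal for the Grassmann scalar product of Section~\ref{transfert:sec}. I expect the main obstacle to be precisely the first step, the identification of $s_{\lambda/\mu}(a,x)$ with the amplitude $\langle\lambda|U^{a}(x)|\mu\rangle$, since it requires both the generalized expansion of $U^{a}(x)$ and the bookkeeping of the admissible index range (the $\nu_{i}-i+l$ must remain within $\{1,\dots,N\}$); once these are secured, the identity is an immediate consequence of the associativity of operator composition and the insertion of intermediate states.
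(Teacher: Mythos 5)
Your proof is correct, and it reaches the identity by a different mechanism than the paper, although both share the same engine, the multiplication law $U^{a}(x)U^{b}(x)=U^{a+b}(x)$. The paper stays at the level of matrix entries: the operator identity translates into the convolution $S_{k}(a+b,x)=\sum_{p+q=k}S_{p}(a,x)S_{q}(b,x)$, which exhibits the Jacobi--Trudi matrix $\big(S_{\lambda_{j}-\mu_{i}+i-j}(a+b,x)\big)$ of \eqref{extendedSchur:eq} as a product of two rectangular matrices built from the $S_{p}(a,x)$ and $S_{q}(b,x)$, and the Cauchy--Binet formula then expands its determinant as a sum over $r$-element intermediate index sets, which, written as $\{\nu_{i}-i+l\}$, are exactly the partitions $\mu\leq\nu\leq\lambda$. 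You instead lift everything to the fermionic Hilbert space: you establish $s_{\lambda/\mu}(a,x)=\langle\lambda|U^{a}(x)|\mu\rangle$ and then insert a complete set of $r$-fermion states between $U^{a}(x)$ and $U^{b}(x)$. The two mechanisms are equivalent---inserting $\sum_{\nu}|\nu\rangle\langle\nu|$ in a fixed fermion-number sector \emph{is} Cauchy--Binet in second-quantized form---but your route never leaves the Grassmann formalism and is the exact transplant, from the splitting $x=(x',x'')$ to the splitting $a+b$, of the paper's own derivation of the vertical convolution at the end of Section~\ref{transfert:sec}. Two small remarks: the minor identification follows from the Grassmann action and scalar product of the transfer-matrix section (orthonormality of Slater monomials) rather than from Theorem~\ref{GVtransfert:thm} itself, which concerns path sums; and the reduction of the sum to $\mu\leq\nu\leq\lambda$, which you correctly flag, holds because $U^{a}(x)$ contains only non-negative powers of $T$ (equivalently, by the block-of-zeros argument from the convention $S_{k}(a,x)=0$ for $k<0$). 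Neither point is a gap; your argument is complete once they are spelled out.
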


\begin{proof}
The proof relies on the multiplication law $U^{a}(x)U^{b}(x)=U^{a+b}(x)$.
This translates to
\begin{equation}
S_{k}(a+b,x)=\sum_{p+q=k}S_{p}(a,x)S_{q}(b,x).
\end{equation}
The result then follows from the expansion of the determinant in \eqref{extendedSchur:eq}, expansion which uses the Cauchy-Binet formula.

\end{proof} 

\begin{example}
The convolution identity for $(2,1)$ reads
\begin{align}
s_{\includegraphics[width=0.3cm]{partition21}}(a+b,x)
=
s_{\includegraphics[width=0.3cm]{partition21}}(a,x)+
s_{\includegraphics[width=0.15cm]{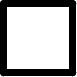}}(a)s_{\includegraphics[width=0.15cm]{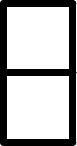}}(b)(x)
+
s_{_{\includegraphics[width=0.3cm]{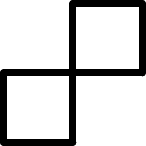}}}(a,x)s_{\includegraphics[width=0.15cm]{partition1}}(b,x)
+
s_{\includegraphics[width=0.15cm]{partition1}}(a,x)s_{\includegraphics[width=0.3cm]{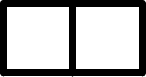}}(b,x)
+s_{\includegraphics[width=0.3cm]{partition21}}(b,x).
\end{align}
\end{example}

Other identities satisfied by $s_{\lambda}(a,x)$ can easily be proven. For example, for the conjugate diagrams (obtained by symmetry with respect to the main diagonal), one has: 
$$s_{\lambda^{\ast}/\mu^{\ast}}(a,x)=(-1)^{|\lambda|-|\mu|}s_{\lambda/\mu}(-a,x).$$


\section*{Acknowledgements}
TK and AT are partially supported by the grant ANR JCJC ``CombPhysMat2Tens".
AT is partially supported by the grant PN 
16 42 01 01/2016.
SC is supported by the grant ANR JCJC ``CombPhysMat2Tens".
AT thanks JF Marckert for carefully reading the first part of this paper.









	
\end{document}